\definecolor{codegray}{rgb}{0.5,0.5,0.5}
\definecolor{backcolour}{rgb}{0.95,0.95,0.92}
\lstdefinestyle{mystyle}{
    backgroundcolor=\color{backcolour},   
    numberstyle=\tiny\color{codegray},
    stringstyle=\color{codepurple},
    basicstyle=\ttfamily\footnotesize,
    breakatwhitespace=false,         
    breaklines=true,                 
    captionpos=b,                    
    keepspaces=true,                 
    numbers=left,                    
    numbersep=5pt,                  
    showspaces=false,                
    showstringspaces=false,
    showtabs=false,                  
    tabsize=2
}
\tikzset{every picture/.style={line width=0.11mm}}
\newcommand{\oPerpSymbol}{\begin{tikzpicture}[scale=0.134] %% scale=0.134 for 12pt; scale=0.112 for 10pt
  \draw (0,-0.5)--(0,1); \draw (-0.866,-0.5)--(0.866,-0.5);
  \draw (0,0) circle [radius=1];
\end{tikzpicture}}
\newcommand{\obot}{\mathbin{\raisebox{-1pt}{\oPerpSymbol}}}
\renewcommand*{\eqref}[1]{%
  \hyperref[{#1}]{\textup{\tagform@{\ref*{#1}}}}%
}
\title{Numerical solution of the div-curl problem by finite element exterior calculus.}
\author{Pascal AZERAD 
\and
Marien-Lorenzo HANOT. \thanks{ IMAG, 
 Université de Montpellier,  CNRS, Montpellier, FRANCE,
 \email{pascal.azerad@umontpellier.fr},
\email{marien-lorenzo.hanot@umontpellier.fr}.
}}
\begin{document}
\maketitle

\begin{abstract} We are interested in the numerical reconstruction of a vector field with  prescribed divergence and curl in a general domain of $\mathbb{R}^3$ or $\mathbb{R}^2$, not necessarily contractible. To this aim,
we introduce some basic concepts of  finite element exterieur calculus \cite{feec-cbms} and  rely heavily on recent results of P. Leopardi and A. Stern \cite{Leopardi2016}.
The goal  of the paper  is to take advantage of  the links between usual vector calculus and exterior calculus and  show the interest of the exterior calculus framework, without too much prior knowledge of the subject.
We start by  describing the method used for contractible   domains and its implementation using the FEniCS library (see \url{fenicsproject.org}).
We then address the problems encountered with non contractible domains and general boundary conditions and explain how to adapt the method to handle these cases.
Finally we give some numerical results obtained with this method, in dimension 2 and 3.
\end{abstract}

\begin{keywords} Finite Element, exterior calculus, Biot-Savart law, Hodge decomposition, Hodge-Dirac complex, de Rham complex, mixed element.
\end{keywords}

\begin{AMS} 35F15, 65N30, 76B47, 76M10, 78M10.
\end{AMS}
\section{Outline of the paper}
In electromagnetism or fluid mechanics, one often encounter the problem of reconstructing a vector field with prescribed divergence and curl. 
As is well-known, the Biot-Savart law allows the reconstruction of a solenoidal vector field $u$ in  the whole space $\mathbb{R}^d$ from its  curl.
$$u (x,t) = \int_{\mathbb{R}^d} K(x,y) \times \mathrm{curl} \,u (y,t) \,dy $$
with 
$$K(x,y) = \left\{
  \begin{array}{ll}
\frac{1}{4\pi}\frac{x-y}{\vert x-y \vert^3} & \mathrm{if}\, d=3 \\
 \frac{1}{2\pi}\frac{x-y}{\vert x-y \vert^2} & \mathrm{if}\, d=2. 
 \end{array}
\right.$$
However it is a singular integral in the whole space and is not easily converted to a numerical algorithm, nor suitable in a bounded domain. 
The div-curl problem \eqref{eq:divcurlpb} has been addressed theoretically e.g. in \cite{daco} or \cite{Auchmuty-Alexander}. The \emph{numerical} computation of the solution in  a bounded domain is less documented, see e.g. \cite{vallisimple} for a finite element solution using divergence free elements and an coercive variational form. The purpose of this paper is to show that the framework of differential forms and exterior calculus greatly simplifies  both the theory and the finite element solution.  In section \ref{Helmholtzdecomposition}  we  show how the div-curl problem is related to the classical Helmholtz decomposition.
In section \ref{Exteriorcalculus} we introduce the main tools of exterior calculus.
In section \ref{weakformulation} we give a natural weak formulation of the div-curl problem,  introduced in \cite{Leopardi2016}, which is well posed. 
In section \ref{Finiteelements} we detail the mixed finite elements compatible with the weak formulation. The implementation within the unified form language \cite{Logg-Mardal-Wells}  is sketched in section \ref{implem}.
The particular case of the 2D case is detailed in section \ref{2D}. 
The case when the domain is not contractible is addressed in section \ref{Noncontractible}.
In section \ref{Boundaryconditions} we describe the cases of natural and essential boundary conditions. 
We close the paper with numerical tests  in 2 and 3 dimensions in section \ref{numerical}.
\section{Helmholtz decomposition} \label{Helmholtzdecomposition}
Let $\Omega$  be a bounded domain in $\mathbb{R}^d$, the div-curl problem consists in finding a vector field with a prescribed divergence and curl. 
For $g$ a scalar field and $f$ a vector field we seek $u$ such that
\begin{equation} \label{eq:divcurlpb}
\begin{aligned}
\nabla \cdot u &= g \quad \mathrm{in} \; \Omega,\\
\nabla \times u &= f\quad \mathrm{in} \; \Omega.
\end{aligned}
\end{equation}
Of course, one must add boundary conditions and specify the regularity of these fields.
The existence of a solution is not guaranteed, indeed some vector fields are not written as the curl of other vector fields. The uniqueness is also an issue, indeed   depending on the domain topology  there may exist  non trivial fields with vanishing divergence and curl, the so called harmonic fields. 
Theses problem make the elaboration of a stable scheme for the numerical solution quite complicated, because one must make sure that the fields $f$ and $g$ are  compatible and in some way  filter  out the  harmonic fields.

Problem \eqref{eq:divcurlpb} is very close to the Helmholtz decomposition.
The Helmholtz decomposition is central in vector calculus.  In its classical formulation 
it states that any field of $\mathbb{R}^3$ which is sufficiently smooth and decreases sufficiently fast at infinity can be decomposed into the sum of a gradient and a curl.
The problem is then to compute this decomposition.
For a given vector field $\mathbf{F}$, find a vector potential $\mathbf{A}$ and a scalar potential $\phi$ such that
\begin{equation} \label{eq:helmholtz}
\mathbf{F} = - \nabla \phi + \nabla \times \mathbf{A}.
\end{equation}
Although this is not equivalent to the equations \eqref{eq:divcurlpb}, we can see the relation by taking $u = \mathbf{A}$ in \eqref{eq:helmholtz} and $f = \mathbf{F}$.

Thus, the problem of the absence of compatibility of $f$ in \eqref{eq:divcurlpb} can be solved by looking simultaneously for  a vector potential $u$ and
a scalar potential $\phi$ as described by \eqref{eq:helmholtz}.
The general idea is then to couple these problems to obtain a well-posed system.

The classical Helmholtz decomposition \eqref{eq:helmholtz} assumes that functions are smooth in  the whole space. For bounded domains, we have the following standard result,  see e.g. \cite{Girault-Raviart} or chapter 9 of \cite{Dautray-Lions}.
Let $\Omega$ be a bounded, simply-connected, Lipschitz domain in $\mathbb{R}^d$,  for any $\mathbf{F} \in (L^2(\Omega))^d$
there exists  $\phi \in H^1(\Omega)$ and $\mathbf{A} \in H(\text{curl},\Omega)$ such that  the decomposition \eqref{eq:helmholtz} is valid.  Of course, there is no uniqueness, since one can add constants to $\phi$ and any gradient field to $\mathbf{A}$.
Prescribing boundary conditions, we get the $L^2$-orthogonal decompositions, proved e.g. in Arnold \cite{feec-cbms}:
\begin{align}
(L^2(\Omega))^d & =   \nabla (H^1(\Omega))  \obot \nabla \times (H_0(\text{curl},\Omega)) \label{eq:helmholtz-mag} \\
(L^2(\Omega))^d &= \nabla (H^1_0(\Omega))  \obot  \nabla \times (H(\text{curl},\Omega)). \label{eq:helmholtz-elec}
\end{align}
We recall the standard definitions: 
$H^1_0(\Omega) = \{u \in L^2(\Omega); \; \nabla u \in L^2(\Omega)^d, \; u  = 0 \;  \text{on} \; \partial\Omega\}$,
$H_0(\text{curl},\Omega)) = \{u \in L^2(\Omega)^d; \; \text{curl}(u) \in L^2(\Omega)^d, \; u\times n = 0  \;\text{on} \;\partial\Omega\}$,
$n$ being the unit outer normal to the boundary $\partial \Omega$.

\section{Exterior calculus} \label{Exteriorcalculus}
Exterior calculus is based on differential forms, which are applications that at any point of the domain associate an alternating multilinear form.
In particular the $0$-forms are simply functions and the $1$-forms can be seen as vector fields (by identifying  linear forms with vectors, thanks to the usual inner product ).
These spaces are endowed  with a natural inner product (built from the one on vectors), which  can be integrated on the whole domain.
This allows us to define Hilbert spaces, in particular we will note $L^2 \Lambda^k(\Omega)$ the set of $L^2$-integrable $k$-forms.
As alternating multilinear applications, it is possible to define an operation between a $k$-form $\alpha$ and an $l$-form $\beta$ denoted by the wedge product $\wedge$ 
giving a $k+l$-form $\alpha \wedge \beta = - \beta \wedge \alpha$. This operation is pointwise and allows to define a basis of the whole algebra from a basis of $1$-forms.
In dimension $3$ a local basis is given in the table \ref{table:basisdim3}.
\begin{table}[h!]
\centering
\begin{tabular}{c|c|c|c|c} Space&$\Lambda^0(\Omega)$ & $\Lambda^1(\Omega)$ & $\Lambda^2(\Omega)$ & $\Lambda^3(\Omega)$ \\
\hline
Basis& $1$ & $ dx, dy, dz $ & $ dy \wedge dz, -dx \wedge dz, dx \wedge dy$ & $ dx \wedge dy \wedge dz$
\end{tabular}
\caption{Basis of the exterior algebra on a domain of $\mathbb{R}^3$.} \label{table:basisdim3}
\end{table}

These forms are also equipped with the exterior derivative operator $d$.
This operator acts globally on the exterior algebra $d : \bigoplus\limits_{k = 0} H \Lambda^k(\Omega) \rightarrow \bigoplus\limits_{k = 0} H \Lambda^k(\Omega)$ 
but it is often interesting to look at its action restricted to each space, where it will respect a notion of degree, specifically :
\[d: H\Lambda^k(\Omega) \rightarrow H\Lambda^{k+1}(\Omega)\ .\]
The operator $d$ is defined as the usual differential on the $0$-forms,
by the property $d \circ d = 0$ and extended on the other forms degree by $d(\alpha \wedge \beta) = d\alpha \wedge \beta + (-1)^k \alpha \wedge d \beta$ for a $k$-form $\alpha$.
This differential operator is of course not defined on all $L^2$-forms but only on a dense subset.
As for the Sobolev spaces, we then consider the subset of $L^2$-forms such that their exterior derivative is again $L^2$.
We note this set $H \Lambda(\Omega)$.

One can see a great similarity with vector calculus, for instance $d \circ d = 0$ corresponds to the well known identity $\mathrm{curl} ( \mathrm{grad}) = 0$ and $\mathrm{div} (\mathrm{curl}) = 0$.
This similarity is deep  since there is a natural identification between vector fields and differential  forms, the identification commutating with these differential operators.
The  identification is depicted by  diagrams \ref{cd:dim2} for dimension $2$ and \ref{cd:dim3} for dimension $3$. We use the natural shortcut $dx \to x, dy \to  y, dz \to z$ to mean that 
the one form $a\,dx + b\,dy + c\,dz$ is identified with the vector $a\cdot\mathbf{e_x} + b \cdot \mathbf{e_y}  + c \cdot \mathbf{e_z} = (a,b,c)$. In the same way, for  2-forms  $dy\wedge dz \to x$,  $ dz\wedge dx \to y $, $dx\wedge dy \to z $ means that $a \, dy\wedge dz +b\,  dz\wedge dx + c \, dx \wedge dy$  is  identified with the vector $ (a,b,c)$, whereas for 3-forms  $dx\wedge dy\wedge dz \to 1$ simply means that the 3 form $ a \,dx\wedge dy\wedge dz $ is identified with the scalar $a$. In dimension 2,  the same conventions apply.
We can notice that there are two possible identifications in dimension 2.
We will refer to the first one in which $\nabla \times$ appears as the curl identification and to the other one (in which $\nabla \cdot$ appears) as the divergence identification.
We refer again to \cite{feec-cbms} for  a thorough exposition of this so called "proxy" identifications.

\begin{figure}
\centering
\begin{tikzpicture}[scale=1.75] 
%% Place nodes
\node (h1) at (0,0) {$H^1(\Omega)$};
\node (h2) at (2,0) {$H(\text{curl}, \Omega)$};
\node (h3) at (4,0) {$H(\text{div}, \Omega)$};
\node (h4) at (6,0) {$L^2(\Omega)$};
\node (l0) at (0,1.5) {$H\Lambda^0(\Omega)$};
\node (l1) at (2,1.5) {$H\Lambda^1(\Omega)$};
\node (l2) at (4,1.5) {$H\Lambda^2(\Omega)$};
\node (l3) at (6,1.5) {$H\Lambda^3(\Omega)$};
%% subnode
\node (h1b) at (0,0.25) {$1$};
\node (h2b1) at (1.75,0.25) {$x$};
\node (h2b2) at (2,0.25) {$y$};
\node (h2b3) at (2.25,0.25) {$z$};
\node (h3b1) at (3.75,0.25) {$x$};
\node (h3b2) at (4,0.25) {$y$};
\node (h3b3) at (4.25,0.25) {$z$};
\node (h4b) at (6,0.25) {$1$};
\node (l0b) at (0,1.25) {$1$};
\node (l1b1) at (1.75,1.25) {$dx$};
\node (l1b2) at (2,1.25) {$dy$};
\node (l1b3) at (2.25,1.25) {$dz$};
\node (l2b1) at (3.5,1.25) {$dy \wedge dz$};
\node (l2b2) at (4,1) {$-dx \wedge dz$};
\node (l2b3) at (4.5,1.25) {$dx \wedge dy$};
\node (l3b) at (6,1.25) {$dx \wedge dy \wedge dz$};
%% draw arrow
\draw[->,blue,thick]
(h1) edge node[auto] {$\nabla$} (h2)
(h2) edge node[auto] {$\nabla \times $} (h3)
(h3) edge node[auto] {$\nabla \cdot $} (h4)
(l0) edge node[auto] {$d$} (l1)
(l1) edge node[auto] {$d$} (l2)
(l2) edge node[auto] {$d$} (l3);
\draw[-,red]
(h1b) edge (l0b)
(h2b1) edge (l1b1)
(h2b2) edge (l1b2)
(h2b3) edge (l1b3)
(h3b1) edge (l2b1)
(h3b2) edge (l2b2)
(h3b3) edge (l2b3)
(h4b) edge (l3b);
\end{tikzpicture}
\caption{Identification between vectors and forms on a domain $\Omega \subset \mathbb{R}^3$.}
\label{cd:dim3}
\end{figure}

\begin{figure}
\centering
\begin{tikzpicture}[scale=1.75] 
%% Place nodes
\node (h1) at (0,0) {$H^1(\Omega')$};
\node (h2) at (2,0) {$H(\text{curl}, \Omega')$};
\node (h4) at (4,0) {$L^2(\Omega')$};
\node (l0) at (0,1.5) {$H\Lambda^0(\Omega')$};
\node (l1) at (2,1.5) {$H\Lambda^1(\Omega')$};
\node (l2) at (4,1.5) {$H\Lambda^2(\Omega')$};
%% subnode
\node (h1b) at (0,0.25) {$1$};
\node (h2b1) at (1.85,0.25) {$x$};
\node (h2b2) at (2.15,0.25) {$y$};
\node (h4b) at (4,0.25) {$1$};
\node (l0b) at (0,1.25) {$1$};
\node (l1b1) at (1.85,1.25) {$dx$};
\node (l1b2) at (2.15,1.25) {$dy$};
\node (l2b3) at (4,1.25) {$dx \wedge dy$};
%% draw arrow
\draw[->,blue,thick]
(h1) edge node[auto] {$\nabla$} (h2)
(h2) edge node[auto] {$\nabla \times $} (h4)
(l0) edge node[auto] {$d$} (l1)
(l1) edge node[auto] {$d$} (l2);
\draw[-,red]
(h1b) edge (l0b)
(h2b1) edge (l1b1)
(h2b2) edge (l1b2)
(h4b) edge (l2b3);
\end{tikzpicture}
\centering
\begin{tikzpicture}[scale=1.75]
%% Place nodes
\node (h1) at (0,0) {$H^1(\Omega')$};
\node (h2) at (2,0) {$H(\text{div}, \Omega')$};
\node (h4) at (4,0) {$L^2(\Omega')$};
\node (l0) at (0,1.5) {$H\Lambda^0(\Omega')$};
\node (l1) at (2,1.5) {$H\Lambda^1(\Omega')$};
\node (l2) at (4,1.5) {$H\Lambda^2(\Omega')$};
%% subnode
\node (h1b) at (0,0.25) {$1$};
\node (h2b1) at (1.85,0.25) {$x$};
\node (h2b2) at (2.15,0.25) {$y$};
\node (h4b) at (4,0.25) {$1$};
\node (l0b) at (0,1.25) {$1$};
\node (l1b1) at (2.2,1.25) {$dy$};
\node (l1b2) at (1.8,1.25) {$-dx$};
\node (l2b3) at (4,1.25) {$dx \wedge dy$};
%% draw arrow
\draw[->,blue,thick]
(h1) edge node[auto] {$\nabla^\perp$} (h2)
(h2) edge node[auto] {$\nabla \cdot $} (h4)
(l0) edge node[auto] {$d$} (l1)
(l1) edge node[auto] {$d$} (l2);
\draw[-,red]
(h1b) edge (l0b)
(h2b1) edge (l1b1)
(h2b2) edge (l1b2)
(h4b) edge (l2b3);
\end{tikzpicture}
\caption{Two possibles identification between vectors and forms on a domain $\Omega' \subset \mathbb{R}^2$.}
\label{cd:dim2}
\end{figure}

Advantages of using  exterior calculus  are twofold.
First a lot of work has already been done for the discretization of these spaces and  operator $d$  (see \cite{feec-cbms}, \cite{Arnold_2010}) as we will see in Section \ref{Finiteelements}.
Second is that  the previously described vector -  differential form  identifications makes the nature of our operators clearer.
Instead of having three different operators $\nabla$, $\nabla \cdot$ and $\nabla \times$ we see that they become after identification a single operator $d$ applied to different spaces.
This allows us to unify the different types of "Helmholtz decomposition" (called Hodge decomposition in the context of exterior calculus) into a single one 
and allows us to understand how to find  stable formulations of the problems \eqref{eq:divcurlpb} and \eqref{eq:helmholtz}.

Before stating this decomposition, let us introduce the adjoints of the operator $d$. 
Indeed, as we can see in the diagram \ref{cd:dim3}, after identification the image of $\nabla$ and that of $\nabla \times$ do not belong to the same space.
This is perfectly normal because these two operators will not occur in the same decomposition, $\nabla$ will occur in the decomposition of $1$-forms and $\nabla \times$ in that of $2$-forms.
The operators completing these decompositions are the adjoints of these operators.

We define the codifferential operator $\delta$ as the adjoint operator of $d$. 
Since it is not defined on the whole $L^2 \Lambda(\Omega)$, we note its domain $\dot{H}^\star \Lambda(\Omega)$. 
Explicitly for $\Omega$ a bounded domain of $\mathbb{R}^3$, the adjoint of $(\nabla, H^1(\Omega))$ is $(- \nabla \cdot, H_0 (\text{div},\Omega))$, 
the adjoint of $( \nabla \times, H(\text{curl},\Omega))$ is $(\nabla \times, H_0 (\text{curl}, \Omega))$ and the adjoint of $(\nabla \cdot, H(\text{div},\Omega))$ is $(- \nabla, H^1_0 (\Omega))$, \cite{feec-cbms} for details.
Thus under the identification in fig. \ref{cd:dim3} we have 
\begin{equation} \label{eq:identadj}
\begin{aligned}
(- \nabla \cdot, H_0 (\text{div},\Omega)) &= (\delta, \dot{H}^\star \Lambda^1(\Omega)), \\
(\nabla \times, H_0 (\text{curl},\Omega)) &= (\delta, \dot{H}^\star \Lambda^2(\Omega)), \\
(- \nabla, H_0^1 (\Omega)) &= (\delta, \dot{H}^\star \Lambda^3(\Omega)).
\end{aligned}
\end{equation}

Let $\Omega$ is a bounded, contractible, Lipschitz domain, to the two  Helmholtz decompositions \eqref{eq:helmholtz-mag}-\eqref{eq:helmholtz-elec}
 correspond the two  following  Hodge decompositions, depending on the choice to identify $(L^2(\Omega))^3$ with $L^2 \Lambda^1(\Omega)$ or with $L^2 \Lambda^2 (\Omega)$.
They are respectively given by
\begin{equation} \label{eq:hodge1f}
L^2 \Lambda^1 (\Omega) = d (H \Lambda^0(\Omega)) \obot \delta (\dot{H}^\star \Lambda^2(\Omega)),
\end{equation}
\begin{equation} \label{eq:hodge2f}
L^2 \Lambda^2 (\Omega) = \delta (\dot{H}^\star \Lambda^3(\Omega)) \obot d (H \Lambda^1(\Omega)).
\end{equation}

Considering $\Lambda^4 (\Omega) \equiv \lbrace 0 \rbrace$ we can also extend the decomposition to the $3$-forms:
\[ L^2 \Lambda^3 (\Omega) = d (H \Lambda^2(\Omega)). \]
This reflects the fact that $\nabla \cdot (H(\text{div},\Omega)) = L^2(\Omega)$.

Finally, if we try to extend the formula to $0$-forms we will encounter a problem.
Indeed, the constant functions are not in the range of $(\nabla \cdot, H_0(\text{div},\Omega))$.
Moreover the constant functions are exactly the kernel of $\nabla$ and are orthogonal to the range of $(\nabla \cdot, H_0(\text{div},\Omega))$,
which is a generic fact that we will develop in  Section \ref{Noncontractible} generalizing the algorithm to general domains.
We will call such function harmonic ($0$-)forms and note their set $\mathfrak{H}^0 \subset L^2 \Lambda^0(\Omega)$ (the set of constant $0$-forms).
The Hodge decomposition is then
\[ L^2 \Lambda^0 (\Omega) = \delta (\dot{H}^\star \Lambda^1 (\Omega)) \obot \mathfrak{H}^0. \]

We can gather these four decompositions into a decomposition of the total space $L^2 \Lambda (\Omega) = \bigoplus\limits_{k = 0}^3 L^2 \Lambda^k(\Omega)$
and on this domain, the Hodge decomposition is given by
\begin{equation} \label{eq:hodge}
L^2 \Lambda (\Omega) = d(H \Lambda(\Omega)) \obot \delta( \dot{H}^\star \Lambda (\Omega)) \obot \mathfrak{H}^0.
\end{equation}

%\begin{remark}
%Being contractible is stronger than just being simply-connected, 
%so \eqref{eq:hodge} is more restrictive than the statement of the standard Helmholtz decomposition \eqref{eq:helmholtz}.
%This difference comes from the fact that the Hodge decomposition \eqref{eq:hodge1f} and \eqref{eq:hodge2f} use respectively the spaces 
%$H_0(\text{curl},\Omega)$ instead of $H(\text{curl},\Omega)$ and $H^1_0(\Omega)$ instead of $H^1(\Omega)$.
%\end{remark}

\section{Weak formulation}\label{weakformulation}
The Hodge decomposition \eqref{eq:hodge} gives the quasi invertibility of the operator $d + \delta : H \Lambda( \Omega) \cap \dot{H}^\star \Lambda (\Omega) \rightarrow L^2 \Lambda (\Omega)$. We mean  by that  the following statement.
\begin{theorem}
 Let $f \in L^2 \Lambda (\Omega) $, there exists $w \in H \Lambda( \Omega) \cap \dot{H}^\star \Lambda (\Omega) $ and $ h \in \mathfrak{H}^0$ such that
$f = (d+\delta) w +h$.
\end{theorem}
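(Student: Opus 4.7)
The plan is to exploit the Hodge decomposition \eqref{eq:hodge} twice in cascade. First apply it to $f$ to write $f = d\phi + \delta\psi + h$, with $\phi \in H\Lambda(\Omega)$, $\psi \in \dot{H}^\star\Lambda(\Omega)$ and $h \in \mathfrak{H}^0$. The naive choice $w = \phi + \psi$ is tempting but fails, because $(d+\delta)(\phi+\psi)$ would also contain the unwanted cross terms $\delta\phi$ and $d\psi$, which a priori are nonzero. The fix is to replace $\phi$ and $\psi$ by more regular potentials that automatically kill these terms.

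Apply \eqref{eq:hodge} a second time, now to $\phi$ and to $\psi$, to obtain
$\phi = d\phi' + \delta\psi' + h'$ and $\psi = d\phi'' + \delta\psi'' + h''$. Define
\[
\alpha := \delta\psi', \qquad \beta := d\phi'', \qquad w := \alpha + \beta.
\]
Since $\alpha$ lies in the range of $\delta$ and $\beta$ in the range of $d$, the identities $\delta\circ\delta = 0$ and $d\circ d = 0$ give $\delta\alpha = 0$ and $d\beta = 0$, so that $(d+\delta)w = d\alpha + \delta\beta$. Applying $d$ to the decomposition of $\phi$ and using $d\circ d=0$ together with $dh' = 0$ (because $h'$ is a constant $0$-form) yields $d\alpha = d\phi$; the symmetric argument on the $\psi$-side gives $\delta\beta = \delta\psi$. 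Hence $(d+\delta)w = d\phi + \delta\psi = f - h$, which is the desired identity.

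The main technical obstacle is to confirm that $\alpha$ and $\beta$ actually belong to $H\Lambda(\Omega)\cap\dot{H}^\star\Lambda(\Omega)$, since the nilpotency identities must be interpreted in the $L^2$ sense for forms of limited regularity. For $\alpha = \delta\psi'$, membership in $\dot{H}^\star\Lambda(\Omega)$ is obtained by approximating $\psi'$ by a sequence of smooth forms $\psi'_n$, noting that $\delta\delta\psi'_n = 0$, and invoking the closedness of $\delta$ as an unbounded operator on $L^2\Lambda(\Omega)$; this also yields $\delta\alpha = 0$. Membership in $H\Lambda(\Omega)$ is established by rewriting $\alpha = \phi - d\phi' - h'$: each summand of the right-hand side is in $\mathrm{dom}(d)$ (for $d\phi'$ use $d\circ d = 0$; for $h'$ use that it is constant), so $\alpha \in \mathrm{dom}(d)$ with $d\alpha = d\phi \in L^2\Lambda(\Omega)$. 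A strictly symmetric argument handles $\beta$, and putting everything together delivers $f=(d+\delta)w+h$.
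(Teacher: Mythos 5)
Your proof is correct and follows essentially the same route as the paper: a cascaded Hodge decomposition of $f$ and then of its two potentials, with $w$ taken to be the sum of the $\delta$-part of the $d$-potential and the $d$-part of the $\delta$-potential, so that nilpotency kills the cross terms. Your extra care in justifying $d\circ d=0$ and $\delta\circ\delta=0$ in the $L^2$ sense (via density and closedness of the operators) is a welcome refinement of a step the paper passes over silently, but it does not change the argument.
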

\begin{proof}
Let $f$ be in  $L^2 \Lambda (\Omega)$, from the Hodge decomposition \ref{eq:hodge}, there exist $u \in H \Lambda( \Omega) $,  $v \in \dot{H}^\star \Lambda (\Omega)$ and $ h \in \mathfrak{H}^0$ such that 
$$f = du + \delta v +h. $$
Let us apply the Hodge decomposition to $u$ and $v$. We can write
$u = dm + \delta p + q$ and $v = dr + \delta s + t$,  where $m,r$ belong to $H \Lambda( \Omega) $,  $p,s$ belong to $\dot{H}^\star \Lambda (\Omega)$ and $ q,t $ are in $\mathfrak{H}^0$.
Now take $w = \delta p + d r$. Using $d^2= \delta^2 =0$ and the fact that $q$ and $t$ are harmonic forms, we  can compute 
$$(d+\delta) w = d \delta p + \delta d r= d( dm + \delta p + q) + \delta (dr + \delta s + t)= du + \delta v.$$
So we have proved that $f = (d+\delta) w +h.$
Furthermore we have that $\delta p \in  L^2 \Lambda (\Omega)$  and $d m \in  L^2 \Lambda (\Omega)$, thus $w  \in  L^2 \Lambda (\Omega).$
Now $dw = d\delta p = du$  belongs to   $L^2 \Lambda (\Omega)$ and $\delta w =\delta d r = \delta v$  belongs to $L^2 \Lambda (\Omega)$, hence
$w \in H \Lambda( \Omega) \cap \dot{H}^\star \Lambda (\Omega).$
\end{proof}
This operator is called the Hodge-Dirac operator and is thoroughly studied in \cite{Leopardi2016}.
The non-invertibility of this operator comes from the space of harmonic forms $\mathfrak{H}= \mathfrak{H}^0$, where we omit the superscript from now on,  being both in the kernel of the operator and orthogonal to its range.
This can be circumvented by using a space orthogonal to the harmonic forms.

Hence the primal formulation of  \eqref{eq:divcurlpb} becomes:\\
Given $f \in L^2 \Lambda (\Omega) \cap \mathfrak{H}^\perp$, find $u \in H \Lambda( \Omega) \cap \dot{H}^\star \Lambda (\Omega) \cap \mathfrak{H}^\perp$
such that $\forall v \in H \Lambda (\Omega)$,
\begin{equation} \label{eq:weak1}
\langle d u, v \rangle + \langle \delta u, v \rangle = \langle f, v \rangle.
\end{equation}
By orthogonality arguments we can see that testing with $v \perp \mathfrak{H}$ does not matter.
Since $d$ is adjoint to $\delta$ we can see that \eqref{eq:weak1} is equivalent to:\\
Given $f \in L^2 \Lambda (\Omega) \cap \mathfrak{H}^\perp$, find $u \in H \Lambda( \Omega) \cap \mathfrak{H}^\perp$ such that $\forall v \in H \Lambda (\Omega)$,
\begin{equation} \label{eq:weak2}
\langle d u, v \rangle + \langle u, d v \rangle = \langle f, v \rangle.
\end{equation}

We remove the condition of orthogonality $f \in  \mathfrak{H}^\perp$ by introducing a new pair of variables.
The problem becomes:\\
Given $f \in L^2 \Lambda (\Omega)$, find $u \in H \Lambda( \Omega), p \in \mathfrak{H}$ such that $\forall v \in H \Lambda (\Omega), \forall q \in \mathfrak{H}$,
\begin{align}
\langle d u, v \rangle + \langle u, d v \rangle + \langle p, v \rangle &= \langle f, v \rangle, \label{eq:weak31} \\
\langle u, q \rangle &= 0. \label{eq:weak32}
\end{align}
We have added the component $\langle p, v \rangle$ to the equation \eqref{eq:weak31}, since by orthogonality we must have $p= P_\mathfrak{H} f$ (the orthogonal projection of $f$ on $\mathfrak{H}$)
and so we will effectively solve for $(d + \delta)u = f - P_{\mathfrak{H}} f$.
Equation \eqref{eq:weak32} ensures injectivity by imposing $P_\mathfrak{H} u = 0$.
\begin{theorem}
For any $f \in L^2 \Lambda (\Omega)$, there is a unique $(u,p) \in H \Lambda( \Omega) \times \mathfrak{H}$ 
solution of \eqref{eq:weak31}-\eqref{eq:weak32}. 
Moreover there exists $c > 0$ depending only on $\Omega$ such that $\Vert u \Vert + \Vert d u \Vert + \Vert p \Vert \leq c \Vert f \Vert$.
\end{theorem}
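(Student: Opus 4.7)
The plan is to leverage the preceding theorem directly: the quasi-invertibility of $d+\delta$ already does most of the work, and the extra variable $p$ is precisely the device that absorbs the harmonic defect. I would organize the proof in three stages, existence, uniqueness, and stability.

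For existence, I would start by applying the preceding theorem to the given $f\in L^2\Lambda(\Omega)$, producing $w\in H\Lambda(\Omega)\cap \dot H^\star\Lambda(\Omega)$ and $h\in\mathfrak{H}$ with $f=(d+\delta)w+h$. Because $(d+\delta)w\in d(H\Lambda)\obot\delta(\dot H^\star\Lambda)$, which by \eqref{eq:hodge} is orthogonal to $\mathfrak{H}$, one has $h=P_{\mathfrak{H}}f$. Next I would replace $w$ by $u:=w-P_{\mathfrak{H}}w$; since $\mathfrak{H}\subset\ker d\cap\ker\delta$, this preserves $(d+\delta)u=f-h$ while enforcing $u\perp\mathfrak{H}$. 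Setting $p:=h$, the second equation \eqref{eq:weak32} holds by construction, and for any $v\in H\Lambda(\Omega)$ the adjointness of $d$ and $\delta$ on $u\in\dot H^\star\Lambda(\Omega)$ gives $\langle u,dv\rangle=\langle\delta u,v\rangle$, so
\[
\langle du,v\rangle+\langle u,dv\rangle+\langle p,v\rangle=\langle (d+\delta)u+h,v\rangle=\langle f,v\rangle,
\]
which is \eqref{eq:weak31}.

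For uniqueness, suppose $(u_0,p_0)\in H\Lambda(\Omega)\times\mathfrak{H}$ solves the homogeneous system. The natural choice $v=u_0$ is not coercive because $\langle du_0,u_0\rangle$ mixes degrees. Instead I would test with $v=du_0$, which is admissible since $d(du_0)=0$ puts $du_0$ in $H\Lambda(\Omega)$. Crucially, the harmonic space $\mathfrak{H}=\mathfrak{H}^0$ consists of $0$-forms while $du_0$ has no $0$-form component (the exterior derivative raises degree), so $\langle p_0,du_0\rangle=0$; then \eqref{eq:weak31} collapses to $\|du_0\|^2=0$, whence $du_0=0$. Substituting this back leaves $\langle u_0,dv\rangle+\langle p_0,v\rangle=0$ for every $v\in H\Lambda(\Omega)$. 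Taking $v=p_0$ (which lies in $H\Lambda(\Omega)$ with $dp_0=0$) yields $\|p_0\|^2=0$. What remains is $u_0\perp d(H\Lambda(\Omega))$, and combined with $u_0\perp\mathfrak{H}$ from \eqref{eq:weak32}, the Hodge decomposition \eqref{eq:hodge} forces $u_0=\delta\eta$ for some $\eta\in\dot H^\star\Lambda(\Omega)$. Adjointness then gives $\|u_0\|^2=\langle u_0,\delta\eta\rangle=\langle du_0,\eta\rangle=0$, so $u_0=0$.

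For the continuous dependence, the estimate on $p$ is immediate: $\|p\|=\|P_{\mathfrak{H}}f\|\le\|f\|$. For $u$ I would view the preceding theorem as saying that the Hodge--Dirac operator $d+\delta:(H\Lambda\cap\dot H^\star\Lambda)\cap\mathfrak{H}^\perp\to\mathfrak{H}^\perp$ is a bounded linear bijection when the domain is endowed with the graph norm, so the Banach bounded-inverse theorem furnishes a constant $c>0$, depending only on $\Omega$, with $\|u\|+\|du\|+\|\delta u\|\le c\|f-p\|\le 2c\|f\|$. Dropping the $\|\delta u\|$ term and combining with the bound on $p$ gives the claimed inequality. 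The main obstacle is the uniqueness step, and specifically realising that $v=du$ is the right test function because its degree structure annihilates the Lagrange multiplier term; once that observation is made, the rest reduces to the Hodge decomposition and the adjoint relation already available.
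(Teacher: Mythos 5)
Your proof is correct, but it follows a genuinely different route from the paper. The paper proves existence, uniqueness and stability in one stroke by verifying an inf--sup condition: starting from a candidate solution $(u,p)$ it applies the Hodge decomposition twice ($u = dm+\delta n+o$, then $m = dr+\delta s+t$), builds the explicit test pair $v = d\delta n+\delta s+p$, $q=o$, bounds the bilinear form from below by $\tfrac12\Vert du\Vert^2+\min(1,\tfrac{1}{2c_p^2})\Vert u\Vert^2+\Vert p\Vert^2$ and the test-function norm from above, and invokes Babu\v{s}ka--Lax--Milgram; the constant is explicit in the Poincar\'e constant $c_p$. You instead get existence directly from the quasi-invertibility theorem (with $u=w-P_{\mathfrak H}w$, $p=P_{\mathfrak H}f$), uniqueness by the nice degree-counting observation that $v=du_0$ kills the multiplier term (followed by $v=p_0$ and the adjointness/Hodge argument), and stability from the bounded-inverse theorem applied to $d+\delta$ on $(H\Lambda\cap\dot H^\star\Lambda)\cap\mathfrak H^\perp$ with its graph norm. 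Both are valid: your argument is shorter and arguably more transparent for the continuous problem, and it makes explicit use of the preceding theorem, whereas the paper's inf--sup construction is quantitative and, more importantly, is exactly the argument that transfers to the discrete problem \eqref{eq:weak4h} with mesh-independent constants --- your open-mapping step is non-constructive and would not yield $h$-uniform stability if mimicked at the discrete level (where, moreover, $\delta_h\neq\delta$). Two details worth spelling out if you keep your route: the restricted Hodge--Dirac operator is surjective onto $\mathfrak H^\perp$ only after projecting $w$ onto $\mathfrak H^\perp$ as in your existence step, and the completeness of the graph-norm space rests on $d$ and $\delta$ being closed operators.
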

For the sake of completeness we give the proof, which follows closely the one given by Stern, \cite[Theorem~6]{Leopardi2016}.
\begin{proof}
Let $u \in H \Lambda (\Omega)$ and $p \in \mathfrak{H}$, 
from the Hodge decomposition \eqref{eq:hodge}, there exist $m \in H \Lambda(\Omega)$, $n \in \dot{H}^\star \Lambda (\Omega)$ and $o \in \mathfrak{H}$ such that 
$u = dm + \delta n + o$. 
Applying once again the Hodge decomposition to $m$, there exist $r \in H \Lambda(\Omega)$, $s \in \dot{H}^\star \Lambda (\Omega)$ and $t \in \mathfrak{H}$ such that 
$m = dr + \delta s + t$.
Take $v = d \delta n + \delta s + p$ and $q = o$ in \eqref{eq:weak31}-\eqref{eq:weak32}.
Noticing that $d v = d \delta s = d m$, $d u = d \delta n$, using the Poincaré inequality $\Vert \delta n \Vert \leq c_p \Vert d \delta n \Vert$ 
and using the orthogonality of the Hodge decomposition the equation reads:
\begin{equation}
\begin{aligned}
\langle du , v \rangle + \langle u, dv \rangle + \langle p, v \rangle + \langle u, q \rangle &=
\langle d \delta n , d \delta n \rangle + \langle u, d m \rangle + \langle p, p \rangle + \langle u, o \rangle \\
&= \Vert d \delta n \Vert^2 + \Vert d m\Vert^2 + \Vert p \Vert^2 + \Vert o\Vert \\
&\geq \frac{1}{2} \Vert d \delta n \Vert^2 + \frac{1}{2 c_p^2} \Vert \delta n \Vert^2 + \Vert d m \Vert^2 + \Vert p \Vert^2 + \Vert o\Vert \\
&\geq \frac{1}{2} \Vert d u \Vert^2 + \min(1,\frac{1}{2 c_p^2}) \Vert u \Vert^2 + \Vert p\Vert^2.
\end{aligned}
\end{equation}
Moreover we can bound the $H \Lambda$-norm of $(v,q)$ by the $H \Lambda$-norm of $(u,p)$ with a Poincaré inequality:
\begin{equation}
\begin{aligned}
\Vert v \Vert^2 + \Vert d v \Vert^2 + \Vert q \Vert^2 
&= \Vert d \delta n\Vert^2 + \Vert \delta s \Vert^2 + \Vert p \Vert^2 + \Vert d \delta s \Vert^2 + \Vert o \Vert^2 \\
&\leq \Vert d u \Vert^2 + c_p^2 \Vert d m \Vert^2 + \Vert p \Vert^2 + \Vert d m \Vert^2 + \Vert o \Vert^2 \\
&\leq (1+ c_p^2) \Vert u \Vert^2 + \Vert d u \Vert^2 + \Vert p \Vert^2.
\end{aligned}
\end{equation}
By the symmetry of the formulation this is enough to conclude with the \\
Babuška–Lax–Milgram theorem.
\end{proof}

Finally, let us translate this problem into the language of vector calculus in $\mathbb{R}^3$ (the $\mathbb{R}^2$ case being analogous  with the  appropriate definition of scalar and vector curl).
First of all the unknowns are in fact a $4$-tuple of fields, so we will write $u = (u_0, u_1, u_2, u_3)$, the subscript pertaining to the degree of the corresponding differential form.
We will keep the notation $\mathfrak{H}$ for the space of harmonic forms which  is simply a vector space of dimension $1$ containing the constant functions.
The problem is then written:
Given $(f_0, f_1, f_2, f_3) \in L^2(\Omega) \times (L^2(\Omega))^3 \times (L^2(\Omega))^3 \times L^2( \Omega)$,\\
find $u_0 \in H^1(\Omega)$, $u_1 \in H(\text{curl},\Omega)$, $u_2 \in H(\text{div}, \Omega)$, $u_3 \in L^2( \Omega)$, $p \in \mathfrak{H}$ 
such that $\forall v_0 \in H^1(\Omega)$, $\forall v_1 \in H(\text{curl},\Omega)$, $\forall v_2 \in H(\text{div}, \Omega)$, $\forall v_3 \in L^2( \Omega)$, $\forall q \in \mathfrak{H}$,
\begin{equation} \label{eq:weak4}
\begin{aligned}
\langle u_1, \nabla v_0 \rangle + \langle p, v_0 \rangle &= \langle f_0 , v_0 \rangle, \\
\langle u_2, \nabla \times v_1 \rangle + \langle \nabla u_0, v_1 \rangle &= \langle f_1, v_1 \rangle, \\
\langle u_3, \nabla \cdot v_2 \rangle + \langle \nabla \times u_1, v_2 \rangle &= \langle f_2, v_2 \rangle, \\
\langle \nabla \cdot u_2, v_3 \rangle &= \langle f_3, v_3 \rangle, \\
\langle u_0, q \rangle &= 0.
\end{aligned}
\end{equation}
This weak formulation was introduced  in \cite{Leopardi2016}, where its wellposedness is also established, thanks to the standard finite element exterior calculus tools \cite{feec-cbms}.
Natural boundary condition result from the weak formulation \eqref{eq:weak4} hence a solution $(u_0,u_1,u_2,u_3)$ satisfies $u_1 \in H_0( \text{div}, \Omega)$, $u_2 \in H_0( \text{curl}, \Omega)$ and $u_3 \in H^1_0(\Omega)$.
Thus a solution of \eqref{eq:weak4} satisfies
\begin{equation}
\begin{aligned}
- \nabla \cdot u_1 + p &= f_0, \\
\nabla u_0 + \nabla \times u_2 &= f_1, \\
\nabla \times u_1 - \nabla u_3 &= f_2, \\
\nabla \cdot u_2 &= f_3.
\end{aligned}
\end{equation}
with the natural boundary conditions
\begin{equation}
\begin{aligned}
 u_1 \cdot n&= 0 & \;\mathrm{on} \; \partial \Omega \\
u_2 \times n &= 0& \;\mathrm{on} \; \partial \Omega\\
u_3 &=0 & \;\mathrm{on} \; \partial \Omega 
\end{aligned}
\end{equation}

Solving the problem \eqref{eq:weak4} simultaneously solves two div-curl problems \eqref{eq:divcurlpb}, computes two Helmholtz decompositions (of $f_1$ and $f_2$) 
and finds two functions with prescribed gradient.
Assuming that the $f_i$-functions are compatible (for the sake of simplicity only, otherwise the problem is solved for their orthogonal projection in the appropriate spaces)
the problem div-curl \eqref{eq:divcurlpb} solved are:
\[ \begin{matrix}\nabla \cdot u_1 = -f_0 \\ \nabla \times u_1 = f_2 \\ u_1 \cdot n =0\; \mathrm{on} \;\partial\Omega\;\end{matrix}\quad \text{and}\quad \begin{matrix}\nabla \cdot u_2 = f_3 \\ \nabla \times u_2 = f_1\\ u_2\times n =0 \; \mathrm{on} \;\partial\Omega \end{matrix}\ .\]
\begin{remark} \label{rem:diffsol}
There are two differences between the two problems, first $u_1$ and $u_2$ do not follow the same boundary conditions, moreover
as we  shall  see below in section \ref{Finiteelements},  the discretization of the problem does not treat the differential  and  codifferential symmetrically.
In particular, we have  no error estimates for the convergence of the discrete codifferential  $|| \delta u - \delta_h u_h ||_{L^2}$, see section \ref{numerical}. %namely one of the two operators will be privileged in the sense that its discrete counterpart will converge faster to the continuous solution. 
\end{remark}

\section{Finite elements} \label{Finiteelements}
The design for finite elements suitable for exterior calculus saw substantial progress recently, with the seminal work of \cite{Arnold_2010},\cite{feec-cbms}.
For the Hodge-Dirac problem (as well as for the Hodge-Laplacian problem), these elements can be realized in a very generic way as shown in the periodic table of the finite elements \cite{periodictable}.

The main properties of these elements are that they form a discrete subcomplex and admit bounded cochain projections.
Being a discrete subcomplex means that the functions constructed on these elements belong to the domain of the exterior derivative 
(just as the functions are respectively included in $H^1$, $H(\text{curl})$, $H(\text{div})$, $L^2$)
and that their derivative (respectively their gradient, curl and div) are included in the functions of the next element. 
This last property allows to have properties exactly verified at the discrete level (for example  discrete fields with exactly zero divergence).
Bounded cochain projections are projections from continuous space to discrete space commuting with the exterior derivative. 
The boundedness for different norms ensures stability and  accurate estimation of the error. These projections exist mainly as theoretical tools 
and their calculation is never performed in the numerical scheme.

\begin{remark}
Although the discrete exterior derivative operator $d_h$ is the same as the continuous $d$ (more precisely its restriction on the discrete spaces), 
the discrete codifferential operator $\delta_h$ has little to do with the continuous operator $\delta$.
Indeed, it is the adjoint of the same operator but on a  different space.
This is why we have removed any occurrence of $\delta$ from the formulation \eqref{eq:weak1}.
\end{remark}

The space of harmonic forms $\mathfrak{H}$ remains in this case the space of constant functions.
Its determination can however be more complicated when using general domains or other boundary conditions. This problem is detailed in  section \ref{Boundaryconditions}.
\begin{remark}
The discrete spaces are then subspaces of continuous spaces and we use a conforming method.
This will not always be the case for general domains where the space of discrete harmonic forms may be different from the continuous space, though they have the same dimensions, see \cite{feec-cbms}.
\end{remark}

Although  other types of meshes such as quadrilaterals are possible \cite{periodictable}, we focus on simplicial meshes.
For each degree of forms, there are two families of piecewise polynomial elements indexed by their polynomial degree.
Consider a simplicial mesh $\mathfrak{T}$ (and denote the cells of the mesh by $T \in \mathfrak{T}$) and a polynomial degree $r$.
The first family is the complete space of polynomials, differing by the type of continuity desired at the interfaces, it is given by 
\begin{equation} \label{eq:fullpoly}
\begin{aligned}
P_r \Lambda^0(\mathfrak{T}) = \lbrace \omega \in H^1(\Omega), \;\forall T \in \mathfrak{T}, \omega_{\vert T} \in P_r(T,\mathbb{R}) \rbrace, \\
P_r \Lambda^1(\mathfrak{T}) = \lbrace \omega \in H(\text{curl}, \Omega), \;\forall T \in \mathfrak{T}, \omega_{\vert T} \in P_r(T,\mathbb{R}^3) \rbrace, \\
P_r \Lambda^2(\mathfrak{T}) = \lbrace \omega \in H(\text{div}, \Omega), \;\forall T \in \mathfrak{T}, \omega_{\vert T} \in P_r(T,\mathbb{R}^3) \rbrace, \\
P_r \Lambda^3(\mathfrak{T}) = \lbrace \omega \in L^2(\Omega), \;\forall T \in \mathfrak{T}, \omega_{\vert T} \in P_r(T,\mathbb{R}) \rbrace.
\end{aligned}
\end{equation}
The space  $P_r(T, \mathbb{R}^k)$ denotes the set of polynomials of degree $r$, defined on the domain $T$ with value in $\mathbb{R}^k$.
The second family are the so-called \emph{trimmed} space with less degrees of freedom, it follows the same continuity conditions at the interfaces as the first one but uses only a subset of the polynomials,
the exact definition of this set can be found in \cite{feec-cbms}, we will simply denote them here by $P_r^- \Lambda^k$. This second family is then given by 
\begin{equation} \label{eq:trimmedpoly}
\begin{aligned}
P_r^- \Lambda^0(\mathfrak{T}) = \lbrace \omega \in H^1(\Omega),\;\forall T \in \mathfrak{T}, \omega_{\vert T} \in P_r^- \Lambda^0 \rbrace, \\
P_r^- \Lambda^1(\mathfrak{T}) = \lbrace \omega \in H(\text{curl}, \Omega),\;\forall T \in \mathfrak{T}, \omega_{\vert T} \in P_r^- \Lambda^1 \rbrace, \\
P_r^- \Lambda^2(\mathfrak{T}) = \lbrace \omega \in H(\text{div}, \Omega),\;\forall T \in \mathfrak{T}, \omega_{\vert T} \in P_r^- \Lambda^2 \rbrace, \\
P_r^- \Lambda^3(\mathfrak{T}) = \lbrace \omega \in L^2(\Omega),\;\forall T \in \mathfrak{T}, \omega_{\vert T} \in P_r^- \Lambda^3 \rbrace.
\end{aligned}
\end{equation}

We always have $P_r^- \Lambda^0(\mathfrak{T}) = P_r \Lambda^0(\mathfrak{T})$ and $P_r^- \Lambda^3(\mathfrak{T}) = P_{r-1} \Lambda^3(\mathfrak{T})$.
The common names (as referred to in the table \cite{periodictable}) for these elements  in $\mathbb{R}^3$ are 
\begin{itemize}
\item Lagrange elements of degree $r$  for $P_r^- \Lambda^0(\mathfrak{T})$,
\item Nedelec's face elements of the first kind for $P_r^- \Lambda^1(\mathfrak{T})$ and of the second kind for $P_r \Lambda^1(\mathfrak{T})$, 
\item Nedelec's edge elements of the first kind for $P_r^- \Lambda^2(\mathfrak{T})$ and of the second kind for $P_r \Lambda^2(\mathfrak{T})$,
\item discontinuous Galerkin for $P_r \Lambda^3(\mathfrak{T})$.
\end{itemize}

From these elements, we have to build a sequence of degree increasing forms. 
For each degree, we can take either the full polynomial element or the trimmed polynomial element.
However, we must choose the appropriate polynomial degree, following a simple rule:
if the next element is a complete polynomial we must go down one degree, if it is trimmed we keep the same polynomial degree. 
Thus for any degree $r$ the sequences
\begin{equation} \label{eq:seq1}
P_r^- \Lambda^0(\mathfrak{T}) \rightarrow P_r^- \Lambda^1(\mathfrak{T}) \rightarrow P_r^- \Lambda^2(\mathfrak{T}) \rightarrow P_r^- \Lambda^3(\mathfrak{T}),
\end{equation}
\begin{equation} \label{eq:seq2}
P_{r+3} \Lambda^0(\mathfrak{T}) \rightarrow P_{r+2} \Lambda^1(\mathfrak{T}) \rightarrow P_{r+1} \Lambda^2(\mathfrak{T}) \rightarrow P_r \Lambda^3(\mathfrak{T})
\end{equation}
are both correct. 

The main difference between the two families comes from their approximation properties.
We define the approximation error for a discrete space $V_h$ embedded in a continuous space $V$ and a function $u \in V$ by $E(u) = \inf_{v_h \in V_h} \Vert u - v_h \Vert$.
Error estimates for schemes are often expressed in terms of these approximation errors.
These errors depend on the size $h$ of the cells in the mesh, and converge to $0$ when $h$ tends to $0$.
For these spaces we have in all cases an error estimate of the form $E(u) \leq C\, h^l \Vert u \Vert_{H^l}$ with $C$ a constant independent of $h$ and $l$ the order of convergence.
For complete spaces of degree $r$ the order of convergence of the approximation error (of a sufficiently regular function) is $r+1$ and the order of convergence of its derivative is $r$.
For trimmed spaces of degree $r$ the order of convergence is $r$ and the order of convergence of its derivative is also $r$.

\begin{remark} \label{rem:diffsol2}
To complete remark \ref{rem:diffsol}, we can see that in the discrete case another difference appears between the $1$-forms and the $2$-forms regarding the regularity of the solutions. According to whether we use Raviart-Thomas-Nedelec edge  (resp.  face)  elements  the discrete solution $u_h$ belongs  to $H(\text{div},\Omega)$  (resp.  $H(\text{curl},\Omega)$).
In the continuous case,  the solution $u$  belongs  to  both $H(\text{div},\Omega) \cap H(\text{curl},\Omega)$. 
\end{remark}

Let $V^0_h \rightarrow V^1_h \rightarrow V^2_h \rightarrow V^3_h$ be the chosen discrete space sequence and $\mathfrak{H}_h$ be the space of discrete harmonic forms,
the discrete problem is then:\\
Given $(f_0, f_1, f_2, f_3) \in L^2(\Omega) \times (L^2(\Omega))^3 \times (L^2(\Omega))^3 \times L^2( \Omega)$ 
find $u_{h0} \in V^0_h$, $u_{h1} \in V^1_h$, $u_{h2} \in V^2_h$, $u_{h3} \in V^3_h$, $p_h \in \mathfrak{H}_h$ 
such that $\forall v_{h0} \in V^0_h$, $\forall v_{h1} \in V^1_h$, $\forall v_{h2} \in V^2_h$, $\forall v_{h3} \in V^3_h$, $\forall q_h \in \mathfrak{H}_h$,
\begin{equation} \label{eq:weak4h}
\begin{aligned}
\langle u_{h1}, \nabla v_{h0} \rangle + \langle p_h, v_{h0} \rangle &= \langle f_0 , v_{h0} \rangle, \\
\langle u_{h2}, \nabla \times v_{h1} \rangle + \langle \nabla u_{h0}, v_{h1} \rangle &= \langle f_1, v_{h1} \rangle, \\
\langle u_{h3}, \nabla \cdot v_{h2} \rangle + \langle \nabla \times u_{h1}, v_{h2} \rangle &= \langle f_2, v_{h2} \rangle, \\
\langle \nabla \cdot u_{h2}, v_{h3} \rangle &= \langle f_3, v_{h3} \rangle, \\
\langle u_{h0}, q_h \rangle &= 0.
\end{aligned}
\end{equation}

The error estimates are calculated in  \cite{Leopardi2016}.
Let $K$ be the solution operator which takes $f \rightarrow u$ in \eqref{eq:weak4} and $\pi_h$ be the bounded cochain projection mentioned at the beginning of section \ref{Finiteelements}.
We define $\eta = \Vert (I - \pi_h)K \Vert$ and $\mu = \Vert (I -  \pi_h) P_\mathfrak{H} \Vert$.
In our case, both converge to $0$, and in practice we can use their expressions computed in \cite{Arnold_2010}, 
which for discrete spaces using polynomial degrees $r$ give $\eta = \mathcal{O}(h)$ and $\mu = \mathcal{O}(h^{r+1})$.
The error estimate for $u$ the continuous solution of \eqref{eq:weak4} and $u_h$ the discrete solution of \eqref{eq:weak4h} gives:
\begin{equation} \label{eq:errestimate}
\begin{aligned}
\Vert d(u - u_h) \Vert &\lesssim E(du), \\
\Vert u - u_h \Vert &\lesssim E(u) + \eta (E(du) + E(p)), \\
\Vert p - p_h \Vert &\lesssim E(p) + \mu E(du).
\end{aligned}
\end{equation}
Where the notation $A \lesssim B$ means that there is a constant $C$ independent of $u$ and $h$ such that $A \leq C B$.
The equations \eqref{eq:errestimate} include all the components of $u = (u_0,u_1,u_2,u_3)$.
\section{Implementation}\label{implem}
With these elements, the implementation of the problem \eqref{eq:weak4h} in \textit{unified form language (UFL)}, see \cite{Alnaes}, is straightforward:
\begin{lstlisting}[caption=Implementation of the variational formulation in ufl., label=code:3D]
degree = 2
elemf0 = FiniteElement('P', tetrahedron, degree)
elemf1 = FiniteElement('N1E', tetrahedron, degree)
elemf2 = FiniteElement('N1F', tetrahedron, degree)
elemf3 = FiniteElement('DG', tetrahedron, degree-1)
elemH = FiniteElement('Real', tetrahedron, 0)
W = MixedElement([elemf0,elemf1,elemf2,elemf3,elemH])

(u0,u1,u2,u3,uh) = TrialFunctions(W)
(v0,v1,v2,v3,vh) = TestFunctions(W)
a1 = (dot(grad(u0),v1) + dot(curl(u1),v2) + div(u2)*v3)*dx
a2 = (dot(u1,grad(v0)) + dot(u2,curl(v1)) + u3*div(v2))*dx
ah = uh*v0*dx + u0*vh*dx 
a = a1 + a2 + ah 
\end{lstlisting}
Our  FEniCS codes and tests are available through the GitHub repository \url{https://github.com/mlhanot/divcurl_solver}.

Being a mixed finite element scheme, the assembled linear system is semidefinite, which for very large number of degrees of freedom could be an issue.
However  for contractible domains we obtained reliable results  on a standard computer  (16 GB RAM)  with direct solvers up  to  several millions of degrees of freedom. For non contractible domains, the main workload is generated by the computations of a linear basis of the harmonic forms. Indeed this amounts to  computing a basis of the  null space of a matrix, which is trickier than  solving a regular linear system.
\section{Problem in two dimensions}\label{2D}
The exterior calculus framework is unified w.r.t.  dimensions, hence
the formulation \eqref{eq:weak31}-\eqref{eq:weak32} remains perfectly valid without any change (except that one needs $\Omega$ to be domain of $\mathbb{R}^2$ instead of $\mathbb{R}^3$).

The expression in the vector calculus formalism is however quite different, in particular there are two possibilities of identification.
The two corresponding problems are then (respectively for the curl identification and the div identification):\\
Given $(f_0,f_1,f_2) \in L^2(\Omega) \times (L^2(\Omega))^2 \times L^2(\Omega)$, find $u_0 \in H^1(\Omega)$, $u_1 \in H(\text{curl},\Omega)$, $u_2 \in L^2(\Omega)$, $p \in \mathfrak{H}$ 
such that $\forall v_0 \in H^1(\Omega)$, $\forall v_1 \in H(\text{curl},\Omega)$, $\forall v_2 \in L^2(\Omega)$, $\forall q \in \mathfrak{H}$,
\begin{equation} \label{eq:weak2d1}
\begin{aligned}
\langle u_1, \nabla v_0 \rangle + \langle p, v_0 \rangle &= \langle f_0, v_0 \rangle, \\
\langle u_2, \nabla \times v_1 \rangle + \langle \nabla u_0, v_1 \rangle &= \langle f_1, v_1 \rangle, \\
\langle \nabla \times u_1, v_2 \rangle &= \langle f_2 , v_2 \rangle, \\
\langle u_0, q \rangle &= 0.
\end{aligned}
\end{equation}
Given $(f_0,f_1,f_2) \in L^2(\Omega) \times (L^2(\Omega))^2 \times L^2(\Omega)$, find $u_0 \in H^1(\Omega)$, $u_1 \in H(\text{div},\Omega)$, $u_2 \in L^2(\Omega)$, $p \in \mathfrak{H}$ 
such that $\forall v_0 \in H^1(\Omega)$, $\forall v_1 \in H(\text{div},\Omega)$, $\forall v_2 \in L^2(\Omega)$, $\forall q \in \mathfrak{H}$,
\begin{equation} \label{eq:weak2d2}
\begin{aligned}
\langle u_1, \nabla^\perp v_0 \rangle + \langle p, v_0 \rangle &= \langle f_0, v_0 \rangle, \\
\langle u_2, \nabla \cdot v_1 \rangle + \langle \nabla^\perp u_0, v_1 \rangle &= \langle f_1, v_1 \rangle, \\
\langle \nabla \cdot u_1, v_2 \rangle &= \langle f_2 , v_2 \rangle, \\
\langle u_0, q \rangle &= 0.
\end{aligned}
\end{equation}

The difference between the two is then the same as in the remark \ref{rem:diffsol}, namely the boundary conditions and the preferred operator.

The choice of finite elements must also be appropriate, although similar to the $3$-dimensional case, one must be careful about the vector element chosen.
The appropriate elements are (using the same semantics as for \eqref{eq:fullpoly} and \eqref{eq:trimmedpoly}):
\begin{itemize}
\item Lagrange elements for $P_r^- \Lambda^0(\mathfrak{T})$,
\item Raviart-Thomas face (or edge) elements for $P_r^- \Lambda^1(\mathfrak{T})$,
\item Brezzi-Doublas-Marini face (or edge) elements for $P_r \Lambda^1(\mathfrak{T})$,
\item discontinuous Galerkin for $P_r \Lambda^2(\mathfrak{T})$.
\end{itemize}
We must use edge elements for the problem \eqref{eq:weak2d1} and face elements for the problem \eqref{eq:weak2d2}.

\begin{remark}
The problem \eqref{eq:weak2d2} is in fact simply the problem \eqref{eq:weak2d1} to which we apply a quarter rotation on the vector space.
This means that we solve the problem in the space of differential forms and only then apply the identification of Fig. \ref{cd:dim2} to come back to the vector fields.
\end{remark}

\section{Non contractible domain and harmonic forms} \label{Noncontractible}
When the domain is no longer contractible, the Hodge decomposition as given in \eqref{eq:hodge} is no longer valid.
As described in section \ref{Exteriorcalculus} the problem comes from the appearing of harmonic forms, i.e. elements in the kernel of the Hodge-Dirac operator 
(equivalently in the kernel of the Hodge-Laplacian or fields $f$ such that $\nabla \cdot f = 0$ and $\nabla \times f = 0$).

This is not really an issue  because we already had harmonic forms in \eqref{eq:weak1} and we can treat them in the same way.
However this poses the problem of determining the space of harmonic forms %(as well as the possible risk of losing a certain compactness property necessary for the proof).
The theoretical aspect of the problem is solved thanks to the famous theorem of De Rham giving the isomorphism between the harmonic forms and the cohomology of the cochain complex.
This cohomology has a very strong geometrical interpretation, its dimension is given by the Betti numbers.
Thus in dimension $2$ the number of harmonic $0$-forms is the number of connected components of the domain, the number of harmonic $1$-forms is the number of holes of the domain and there are no harmonic $2$-forms.
In dimension $3$ the number of harmonic $0$-forms is still the number of connected components, the number of harmonic $1$-forms corresponds to the number of tunnels and the number of harmonic $2$-forms corresponds to the 
number of vacuum bubbles, there is no harmonic $3$-forms.
This is illustrated in figure \ref{fig:crossH1} showing the two harmonic $1$-forms on a disk with two holes,
and figures \ref{fig:crossH2} and \ref{fig:crossH3} showing the two harmonic $1$-forms on a hollow torus.
\begin{figure}
\centering
\includegraphics[width=0.6\textwidth]{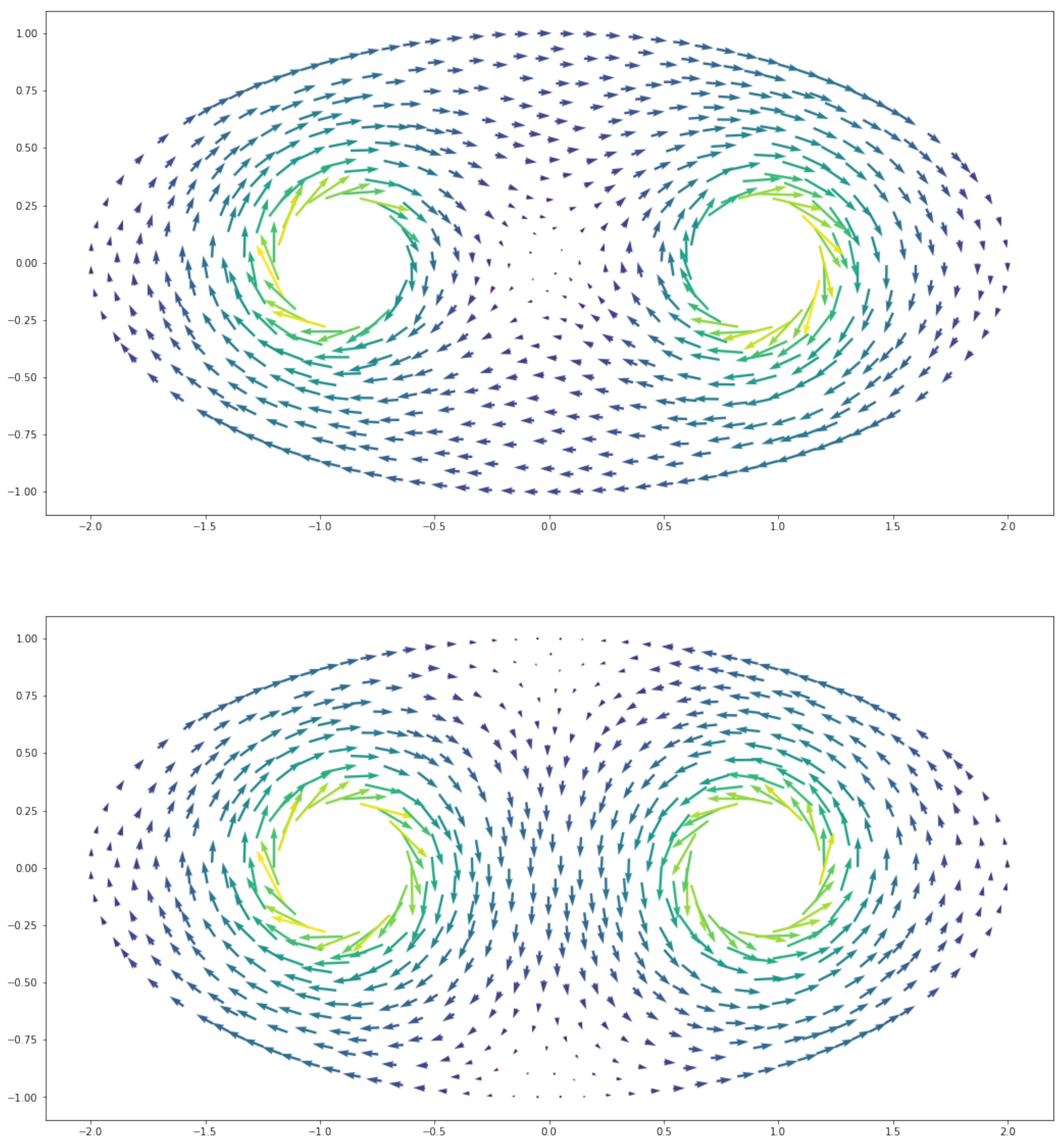}
\caption{Harmonic $1$-forms on a surface with two holes.}
\label{fig:crossH1}
\end{figure}

\begin{figure}
\centering
\includegraphics[width=\textwidth]{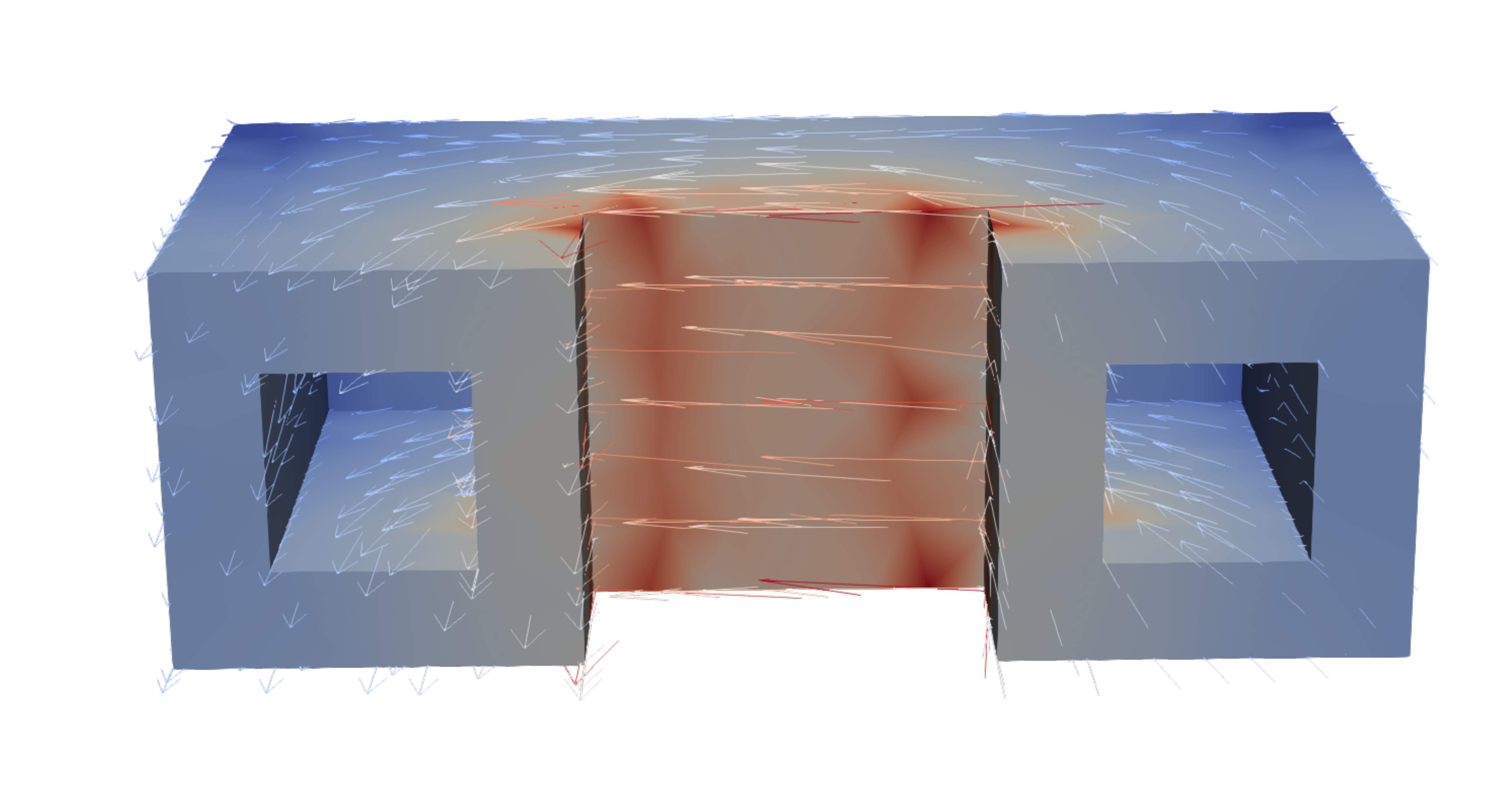}
\caption{First harmonic $1$-form on a 3D hollow torus sliced for visualisation.}
\label{fig:crossH2}
\end{figure}

\begin{figure}
\centering
\includegraphics[width=\textwidth]{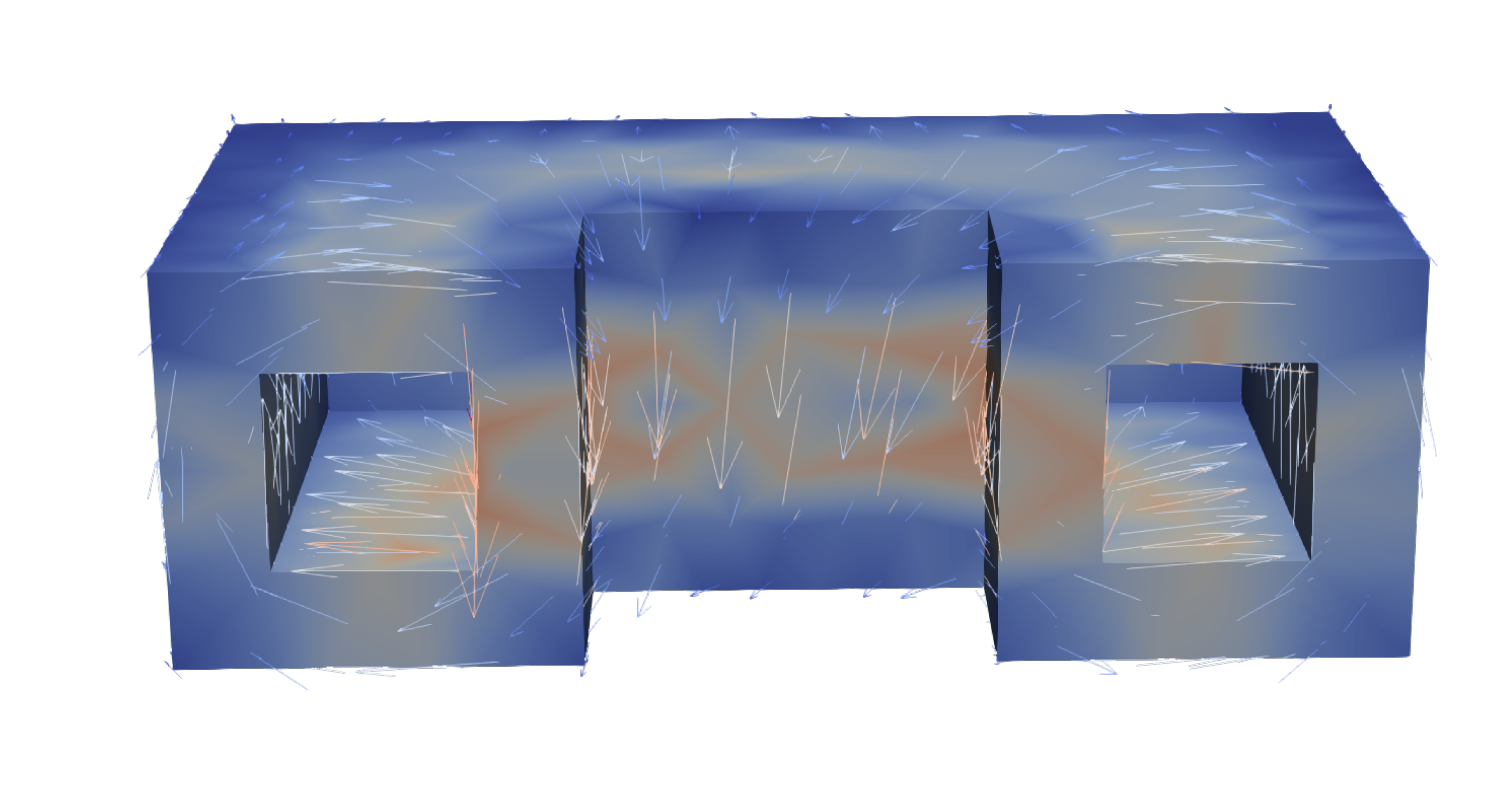}
\caption{Second harmonic $1$-form on a 3D hollow torus sliced for visualisation.}
\label{fig:crossH3}
\end{figure}
Another important theorem gives the isomorphism between discrete and continuous harmonic forms (see \cite{feec-cbms}, chapter 5 and 7.6), 
so the dimension  of  the space of harmonic forms does not  depend on the discretization or the elements chosen.

However the actual search for these forms is  much more complicated. 
The isomorphisms only give their dimension number, and a heuristic idea of their shapes.
To compute them, we will start from their definition as kernel of the Hodge-Dirac operator, which corresponds exactly to the kernel of the assembled matrix of the system (see \cite{Arnold_2010}).
The problem of determining a basis of harmonic forms becomes in practice a problem of finding kernels of matrices.
The dimension of these kernels is a useful information for many algorithms (for example to search for the smallest eigenvalues) and the idea of the solution  shapes can give a good initial guess.
However, the problem remains intricate because of the size of the linear  systems.

In our numerical computations with FEniCS, we have achieved the best results with the  numerical algebra library SLEPc, see  \url{https://slepc.upv.es/}.

Once a basis of harmonic forms is determined, it is enough to add them to the space $\mathfrak{H}$ in the scheme. 
The proofs are done in this general framework and still give the right estimates.

\section{Boundary conditions} \label{Boundaryconditions}
So far, we have not imposed any essential (Dirichlet) conditions on our spaces, so natural conditions have emerged.
Although these conditions are sufficient to ensure wellposedness, they fix the degree of forms in which we look for our solution 
and we have noticed in the remarks \ref{rem:diffsol} and \ref{rem:diffsol2} that these choices have an impact on the convergence and the regularity of our solutions. Specifically, natural boundary conditions  $u \times n =0$  (resp. $u \cdot n =0$ ) imposes $u$ to be a 2-form, i.e. $u \in  H(\text{div},\Omega)$  
(resp.  $u$ to be a 1-form, i.e. $H(\text{curl},\Omega)$). 
If we  wish that $u_h$ belongs to the  space   which does not correspond to the natural condition obtained, a simple way is
 to apply homogeneous Dirichlet conditions to all spaces. 
The sequence then becomes 
\begin{equation}
\begin{tikzcd}
H_0^1(\Omega) \arrow[r, "\nabla"] & H_0(\text{curl},\Omega) \arrow[r, "\nabla \times"] & H_0(\text{div},\Omega) \arrow[r, "\nabla \cdot"] & L^2(\Omega).
\end{tikzcd}
\end{equation}

From a theoretical point of view, enforcing Dirichlet conditions on all spaces does not pose any problem, 
we just end up with another complex, dual of the first one and all the theorems still work with one difference:
the number (dimension) of harmonic forms are inverted, so under these conditions there are no $0$ harmonic forms and as many $3$ harmonic forms (in dimension $3$) as there are connected components.
One must adjust the $\mathfrak{H}$ space in the formulation, so replace $\langle p, v_0 \rangle$ by $\langle p, v_3 \rangle$ in \eqref{eq:weak4}.

The situation becomes more complicated when mixed conditions are applied (natural on some faces, essential on others).
New harmonic forms can then appear even for simple domains
as illustrated in figure \ref{fig:Hrect}.
\begin{figure}
\center
\includegraphics{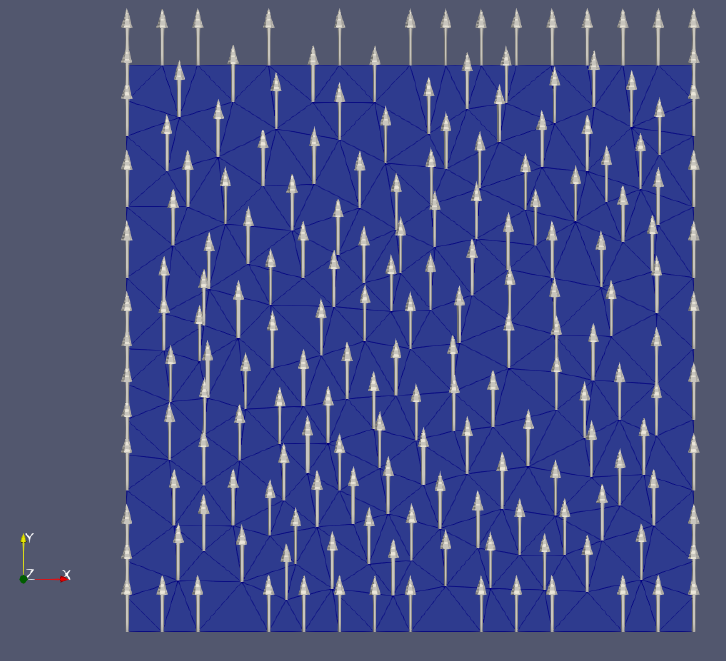}
\caption{Harmonic $1$-form on a contractible domain with mixed boundary conditions.}
\label{fig:Hrect}
\end{figure}

\section{Numerical application}\label{numerical}
To conclude we present some results computed with the weak form \eqref{eq:weak4h}.
In dimension 2 we take for reference function \eqref{eq:refd2} on the unit square $(0,1)^2$, 
tailored to accommodate the boundary conditions, to have a non-trivial divergence and curl without being symmetric.
In dimension 3 we take for reference function \eqref{eq:refd3} on the unit cube $(0,1)^3$.
The test with non-trivial harmonic forms is delicate, indeed the reference function will not be orthogonal to these forms and these forms are not known for most of the meshes. Thus  we computed the rate of convergence in dimension 2 only, 
using periodic conditions  on the  edges $(x=0)$ and $(x=1)$ of the unit square with vanishing tangential components on the other edges  because the harmonic forms are explicitly known in this case, see Fig. \ref{fig:Hrect}.
\begin{equation} \label{eq:refd2}
u = \begin{pmatrix} \sin(3 \pi x)\cos(\pi y)\\ 
\sin(\pi y) \cos(2 \pi x)
\end{pmatrix}
\end{equation}

\begin{equation} \label{eq:refd3}
u = \begin{pmatrix} \sin(3 \pi x) \cos(\pi y) z\\ 
\sin(\pi y) \cos(2 \pi x)+z \\
\sin(\pi z) \cos(3 \pi x) \cos(\pi y)
\end{pmatrix}
\end{equation}

We report the rate of convergence of the discrete solution towards the analytical solution, 
the rate of convergence of its differential (divergence or curl as the case may be) and the rate of convergence of its codifferential.
The latter is calculated in two steps, first we make the orthogonal projection $P$ of the function on an appropriate space 
(if the function has been defined on the face elements we project it on the corresponding edge elements and inversely).
Then we compute its codifferential.
Although there is no theoretical result on this convergence to our knowledge,  
we can observe in the last columns of the tables a convergence  rate  for the codifferential but with a loss of one degree with respect to the approximation property of the space of $u_h$. Of course as reported in   tables \ref{table:dim2Adjdeg1}, \ref{table:dim2Directdeg1}, \ref{table:dim31formdeg1} and \ref{table:dim32formdeg1},  when degree one polynomials are used  no convergence of the codifferentials can be expected.

Using a contractible domain and the sequence of finite elements \eqref{eq:seq1}, we can see the convergence rates in dimension $2$ for the divergence identification 
for  polynomials of degree $1$ and $2$ in  tables \ref{table:dim2Adjdeg1} and \ref{table:dim2Adjdeg2}, 
for the curl identification in  tables \ref{table:dim2Directdeg1} and \ref{table:dim2Directdeg2}.
In dimension $3$ we can see the convergence rates for  $1$-forms in  tables \ref{table:dim31formdeg1} and \ref{table:dim31formdeg2},
for  $2$-forms in tables \ref{table:dim32formdeg1} and \ref{table:dim32formdeg2}.
We can see the convergence rates with the sequence of elements \eqref{eq:seq2} of degree $r = 0$ and $r = 1$ in dimension $2$ with the divergence identification in  tables \ref{table:dim2Adjfull1} and \ref{table:dim2Adjfull2}.
Finally, we can see the rate of convergence when harmonic $1$-forms are present in dimension $2$ with the divergence identification using the sequence \eqref{eq:seq2} of polynomial degree $2$ in the table \ref{table:dim2AdjHdeg2}.
We notice in tables \ref{table:dim2Adjfull1} and \ref{table:dim2Adjfull2} that using \emph{different} polynomials degrees like in equation \eqref{eq:seq2}  we have no loss  of convergence  for  the codifferential. We have no theoretical proof of this fact.

\begin{table}
\centering
\begin{tabular}{|c|c|c|c|c|c|c|} \hline
 h & $\Vert u - u_h \Vert$ & rate & $\Vert \nabla \cdot (u - u_h) \Vert$ & rate & $\Vert \nabla \times (u - P u_h) \Vert$ & rate \tabularnewline
 \hline
0.1414 & 0.1753 & --- & 1.1268 & --- & 2.8449 & --- \\
\hline
0.0707 &0.0854 & 1.03 & 0.5660 &0.99 & 4.8353 &-0.76\\
\hline
0.0353 &0.0427 &0.99 & 0.2833 &0.99 & 4.7347 &0.03\\
\hline
0.0176 & 0.0213 &1.00 & 0.1416 &0.99 & 4.8947 &-0.04\\
\hline
0.0088 &0.0107 &0.99 & 0.0708 &0.99 & 4.3588 &0.16\\
\hline
0.0044 &0.0053 &0.99 & 0.0354 &0.99 & 4.0170 & 0.11\\
\hline
0.0022 &0.0026& 1.01& 0.0177 &0.99 & 4.7146 &-0.23\\
\hline
\end{tabular}
\caption{Convergence rates for the sequence \eqref{eq:seq1} of degree $1$ in $2$-dimension with the divergence identification.} \label{table:dim2Adjdeg1}
\end{table}

\begin{table}
\centering
\begin{tabular}{|c|c|c|c|c|c|c|} \hline
 h & $\Vert u - u_h \Vert$ & rate & $\Vert \nabla \cdot (u - u_h) \Vert$ & rate & $\Vert \nabla \times (u - P u_h) \Vert$ & rate \tabularnewline
 \hline
0.1414 &0.04518 & --- &0.43621 & --- &1.09441 & ---\\
\hline
0.0707 &0.01084 &2.05 &0.11165 &1.96 &0.50165 &1.12\\
\hline
0.0353 &0.00272 &1.99 &0.02806 &1.99 &0.25539 &0.97\\
\hline
0.0176 &0.00068 &1.99 &0.00702 &1.99 &0.12753 &1.00\\
\hline
0.0088 &0.00017 &1.99 &0.00175 &1.99 &0.06557 &0.95\\
\hline
0.0044 &0.00004 &1.99 &0.00043 &1.99 &0.03346 &0.97\\
\hline
\end{tabular}
\caption{Convergence rates for the sequence \eqref{eq:seq1} of degree $2$ in $2$-dimension with the divergence identification.} \label{table:dim2Adjdeg2}
\end{table}

\begin{table}
\centering
\begin{tabular}{|c|c|c|c|c|c|c|} \hline
 h & $\Vert u - u_h \Vert$ & rate & $\Vert \nabla \cdot (u - u_h) \Vert$ & rate & $\Vert \nabla \times (u - P u_h) \Vert$ & rate \tabularnewline
 \hline
0.1414 &0.027239 & --- &1.126863 & --- &0.909705 & ---\\
\hline
0.0707 &0.006622 &2.04 &0.566014 &0.99 &0.401096 &1.18\\
\hline
0.0353 &0.001653 &2.00 &0.283319 &0.99 &0.202041 &0.98\\
\hline
0.0176 &0.000413 &2.00 &0.141698 &0.99 &0.101697 &0.99\\
\hline
0.0088 &0.000103 &1.99 &0.070854 &0.99 &0.050725 &1.00\\
\hline
0.0044 &0.000026 &1.99 &0.035427 &0.99 &0.025886 &0.97\\
\hline
0.0022 &0.000006 &2.00 &0.017713 &0.99 &0.012923 &1.00\\
\hline
\end{tabular}
\caption{Convergence rates for the sequence \eqref{eq:seq2} of degree $2$-$1$-$0$ in $2$-dimension with the divergence identification.} \label{table:dim2Adjfull1}
\end{table}

\begin{table}
\centering
\begin{tabular}{|c|c|c|c|c|c|c|} \hline
 h & $\Vert u - u_h \Vert$ & rate & $\Vert \nabla \cdot (u - u_h) \Vert$ & rate & $\Vert \nabla \times (u - P u_h) \Vert$ & rate \tabularnewline
 \hline
0.1414 &0.041798 & --- &0.436218 & --- &0.092080 & ---\\
\hline
0.0707 &0.009947 &2.07 &0.111658 &1.96 &0.021575 &2.09\\
\hline
0.0353 &0.002500 &1.99 &0.028061 &1.99 &0.005424 &1.99\\
\hline
0.0176 &0.000626 &1.99 &0.007024 &1.99 &0.001352 &2.00\\
\hline
0.0088 &0.000156 &1.99 &0.001756 &1.99 &0.000344 &1.97\\
\hline
0.0044 &0.000039 &1.99 &0.000439 &1.99 &0.000087 &1.97\\
\hline
\end{tabular}
\caption{Convergence rates for the sequence \eqref{eq:seq2} of degree $3$-$2$-$1$ in $2$-dimension with the divergence identification.} \label{table:dim2Adjfull2}
\end{table}

\begin{table}
\centering
\begin{tabular}{|c|c|c|c|c|c|c|} \hline
 h & $\Vert u - u_h \Vert$ & rate & $\Vert \nabla \times (u - u_h) \Vert$ & rate & $\Vert \nabla \cdot (u - P u_h) \Vert$ & rate \tabularnewline
 \hline
0.1414 &0.1754 & --- &0.6344 & --- &3.9210 & ---\\
\hline
0.0707 &0.0832 &1.07 &0.3182 &0.99 &3.8695 &0.01\\
\hline
0.0353 &0.0414 &1.00 &0.1592 &0.99 &3.6632 &0.07\\
\hline
0.0176 &0.0207 &1.00 &0.0796 &0.99 &3.5675 &0.38\\
\hline
0.0088 &0.0104 &0.99 &0.0398 &0.99 &3.3669 &0.08\\
\hline
0.0044 &0.0052 &0.98 &0.0199 &0.99 &2.9584 &0.18\\
\hline
0.0022 &0.0026 &1.01 &0.0099 &0.99 &3.1698 &-0.09\\
\hline
\end{tabular}
\caption{Convergence rates for the sequence \eqref{eq:seq1} of degree $1$ in $2$-dimension with the curl identification.} \label{table:dim2Directdeg1}
\end{table}

\begin{table}
\centering
\begin{tabular}{|c|c|c|c|c|c|c|} \hline
 h & $\Vert u - u_h \Vert$ & rate & $\Vert \nabla \times (u - u_h) \Vert$ & rate & $\Vert \nabla \cdot (u - P u_h) \Vert$ & rate \tabularnewline
 \hline
0.1414 &0.029911 & --- &0.207629 & --- &1.375620 &---\\
\hline
0.0707 &0.007369 &2.02 &0.052777 &1.97 &0.671374 &1.03\\
\hline
0.0353 &0.001851 &1.99 &0.013249 &1.99 &0.342535 &0.97\\
\hline
0.0176 &0.000463 &1.99 &0.003315 &1.99 &0.171992 &0.99\\
\hline
0.0088 &0.000116 &1.99 &0.000829 &1.99 &0.086888 &0.98\\
\hline
0.0044 &0.000029 &1.99 &0.000207 &1.99 &0.044053 &0.97\\
\hline
\end{tabular}
\caption{Convergence rates for the sequence \eqref{eq:seq1} of degree $2$ in $2$-dimension with the curl identification.} \label{table:dim2Directdeg2}
\end{table}

\begin{table}
\centering
\begin{tabular}{|c|c|c|c|c|c|c|} \hline
 h & $\Vert u - u_h \Vert$ & rate & $\Vert \nabla \times (u - u_h) \Vert$ & rate & $\Vert \nabla \cdot (u - P u_h) \Vert$ & rate \tabularnewline
 \hline
0.3464 &0.3128 & --- &2.2284 & --- &4.1974 & ---\\
\hline
0.1732 &0.1590 &0.97 &1.1579 &0.94 &4.2920 &-0.03\\
\hline
0.0866 &0.0790 &1.00 &0.5852 &0.98 &4.1096 &0.03\\
\hline
0.0044 &0.0393 &1.00 &0.2934 &0.99 &3.9851 &0.04\\
\hline
\end{tabular}
\caption{Convergence rates for the sequence \eqref{eq:seq1} of degree $1$ for $1$-forms in $3$-dimension.} \label{table:dim31formdeg1}
\end{table}

\begin{table}
\centering
\begin{tabular}{|c|c|c|c|c|c|c|} \hline
 h & $\Vert u - u_h \Vert$ & rate & $\Vert \nabla \times (u - u_h) \Vert$ & rate & $\Vert \nabla \cdot (u - P u_h) \Vert$ & rate \tabularnewline
 \hline
0.3464 &0.0598 & --- &0.5617 & --- &1.9609 & ---\\
\hline
0.1732 &0.0151 &1.97 &0.1476 &1.92 &0.9729 &1.01\\
\hline
0.0866 &0.0038 &1.99 &0.0373 &1.98 &0.4924 &0.98\\
\hline
\end{tabular}
\caption{Convergence rates for the sequence \eqref{eq:seq1} of degree $2$ for $1$-forms in $3$-dimension.} \label{table:dim31formdeg2}
\end{table}

\begin{table}
\centering
\begin{tabular}{|c|c|c|c|c|c|c|} \hline
 h & $\Vert u - u_h \Vert$ & rate & $\Vert \nabla \cdot (u - u_h) \Vert$ & rate & $\Vert \nabla \times (u - P u_h) \Vert$ & rate \tabularnewline
 \hline
0.3464 &0.2949 & --- &1.0490 & --- &4.4544 & ---\\
\hline
0.1732 &0.1439 &1.03 &0.5307 &0.98 &3.8759 &0.20\\
\hline
0.0866 &0.0722 &0.99 &0.2663 &0.99 &3.9021 &-0.00\\
\hline
0.0044 &0.0361 &0.99 &0.1333 &0.99 &3.9106 &-0.00\\
\hline
\end{tabular}
\caption{Convergence rates for the sequence \eqref{eq:seq1} of degree $1$ for $2$-forms in $3$-dimension.} \label{table:dim32formdeg1}
\end{table}

\begin{table}
\centering
\begin{tabular}{|c|c|c|c|c|c|c|} \hline
 h & $\Vert u - u_h \Vert$ & rate & $\Vert \nabla \cdot (u - u_h) \Vert$ & rate & $\Vert \nabla \times (u - P u_h) \Vert$ & rate \tabularnewline
 \hline
0.3464 &0.1083 & --- &0.8580 & --- &1.9663 & ---\\
\hline
0.1732 &0.02649 &2.03 &0.2301 &1.89 &0.9586 &1.03\\
\hline
0.0866 &0.0067 &1.97 &0.0587 &1.97 &0.4971 &0.94\\
\hline
\end{tabular}
\caption{Convergence rates for the sequence \eqref{eq:seq1} of degree $2$ for $2$-forms in $3$-dimension.} \label{table:dim32formdeg2}
\end{table}

\begin{table}
\centering
\begin{tabular}{|c|c|c|c|c|c|c|} \hline
 h & $\Vert u - u_h \Vert$ & rate & $\Vert \nabla \cdot (u - u_h) \Vert$ & rate & $\Vert \nabla \times (u - P u_h) \Vert$ & rate \tabularnewline
 \hline
0.1414 &0.04518 & --- &0.43621 & --- &1.09441 &---\\
\hline
0.0707 &0.01084 &2.05 &0.11165 &1.96 &0.50165 &1.12\\
\hline
0.0353 &0.00272 &1.99 &0.02806 &1.99 &0.25539 &0.97\\
\hline
0.0176 &0.00068 &1.99 &0.00702 &1.99 &0.12753 &1.00\\
\hline
0.0088 &0.00017 &1.99 &0.00175 &1.99 &0.06557 &0.95\\
\hline
\end{tabular}
\caption{Convergence rates for the sequence \eqref{eq:seq1} of degree $2$ in $2$-dimension with the divergence identification on a non contractible domain.} \label{table:dim2AdjHdeg2}
\end{table}

\section*{Acknowledgements}The first author would like to thank the Isaac Newton Institute for Mathematical Sciences, Cambridge, for support and hospitality during the programme \emph{GCS-Geometry, compatibility and structure preservation in computational differential equations, 2019} where work on this paper was initiated. This work was supported by EPSRC grant no EP/R014604/1.

\printbibliography

\end{document}